\newtheorem{theorem}{Theorem}[section]
\newtheorem{lemma}[theorem]{Lemma}
\newtheorem{proposition}[theorem]{Proposition}
\newtheorem{corollary}[theorem]{Corollary}
\newtheorem{observation}[theorem]{Observation}
\newtheorem*{main:CF}{Theorem \ref{thm:CF}}
\theoremstyle{definition}
\newtheorem{definition}[theorem]{Definition}
\newtheorem{remark}[theorem]{Remark}
\newcommand {\R}{\mathbb{R}}
\newcommand {\N}{\mathbb{N}}
\newcommand{\CF}{\mathcal{C}F}
\newcommand{\DF}{\mathcal{D}F}
\newcommand{\CPB}{\mathcal{C}PB}
\newcommand{\Poset}{\mathcal{P}}
\DeclareMathOperator{\CAT}{CAT}
\DeclareMathOperator{\Diag}{Diag}
\newcommand{\defeq}{\mathrel{\vcentcolon =}}
\begin{document}

\title{On Belk's classifying space for Thompson's group $F$}

\author[L.~Sabalka]{Lucas~Sabalka}
      \address{Lincoln, NE}
\email{sabalka@gmail.com}

\author[M.~C.~B.~Zaremsky]{Matthew~C.~B.~Zaremsky}
      \address{Department of Mathematical Sciences\\
               Binghamton University\\
               Binghamton, NY 13902
}
\email{zaremsky@math.binghamton.edu}

\date{\today}
\thanks{The second author was supported by the SFB~701 in Bielefeld and SFB~878 in M\"unster during the course of this work}
\keywords{Thompson's group, classifying space, configuration space}
\subjclass[2010]{Primary:   20F65; 
                 Secondary: 57M07
}

\begin{abstract}
 The space of configurations of $n$ ordered points in the plane serves as a classifying space for the pure braid group $PB_n$. Elements of Thompson's group $F$ admit a model similar to braids, except instead of braiding the strands split and merge. In Belk's thesis, a space $\CF$ was considered, of configurations of points on the real line allowing for splitting and merging, and a proof was sketched that $\CF$ is a classifying space for $F$. The idea there was to build the universal cover and construct an explicit contraction to a point. Here we start with an established $\CAT(0)$ cube complex $X$ on which $F$ acts freely, and construct an explicit homotopy equivalence between $X/F$ and $\CF$, proving that $\CF$ is indeed a $K(F,1)$.
\end{abstract}

\maketitle

\section{Introduction}\label{sec:intro}

Thompson's group $F$ is the group of all piecewise linear homeomorphisms of $[0,1]$ with finitely many breakpoints, all at dyadic rational numbers, and with slopes all powers of two. This group is a rare example of a finitely presented, torsion-free group with infinite cohomological dimension. Elements of $F$ can also be represented by \emph{strand diagrams}, which are like braid diagrams except the strands \emph{split} and \emph{merge} rather than braiding; see \cite{belk14}.

We say that a space $X$ is a \emph{classifying space} for a group $G$ if $\pi_1(X) \cong G$ and the universal cover $\widetilde{X}$ of $X$ is contractible.  This is equivalent to saying $X$ is a $K(G,1)$ space, meaning that $\pi_k(X)$ is trivial when $k\neq 1$ and is $G$ when $k=1$. This condition that the higher homotopy groups vanish is called being \emph{aspherical}. Classifying spaces are unique up to homotopy equivalence, and one often seeks to find ``nice'' representatives in the homotopy type for a given $K(G,1)$.

It is a classical fact that there is a classifying space for the pure braid group $PB_n$ given by the space $\CPB_n$ of ordered configurations of $n$ points in the plane. If the points are unordered, we find a classifying space for the braid group $B_n$. Since strand diagrams for Thompson's group $F$ resemble braid diagrams, one might expect there to be a classifying space for $F$ given by configurations of some sort. In Belk's thesis \cite{belk04}, a certain space $\CF$ of configurations of points on the real line was considered, and a proof was sketched that $\CF$ is a classifying space for $F$. We prove Belk's claim that $\CF$ is indeed a classifying space for $F$.

\begin{main:CF}
 The space $\CF$ is a classifying space for $F$.
\end{main:CF}

We do not directly follow Belk's proof sketch, but rather make use of a certain space $X$, discussed in Section~\ref{sec:stein}. This $X$ is a $\CAT(0)$ cube complex on which $F$ acts freely; it was first considered by Stein \cite{stein92}, and has also appeared in work of Brown \cite{brown92} and Farley \cite{farley03}. We exhibit an explicit homotopy equivalence between $X/F$ and $\CF$, which proves that $\CF$ is indeed a $K(F,1)$.

\subsection*{Acknowledgments}

The first author thanks Keith Jones for the discussions that led to this project. The second author thanks Kai-Uwe Bux and Stefan Witzel for helpful discussions and suggestions. We are also grateful to Jim Belk and Ross Geoghegan for helpful conversations, and to an anonymous referee for catching a critical error in a previous version of this paper.


\section{Spaces of configurations}\label{sec:config_spaces}

In this section we recall the spaces of configurations in the plane serving as classifying spaces for pure braid groups and braid groups, and then define Belk's space $\CF$ of certain configurations in the line.

\subsection{Braid group examples}\label{sec:braid}

There are some well known classifying spaces for the braid group $B_n$ and pure braid group $PB_n$ realized as \emph{configuration spaces} of points in the plane \cite{birman05}. Let $\CPB_n$ denote the space of all $n$-tuples of pairwise distinct complex numbers, i.e.,
	$$\CPB_n \defeq \mathbb{C}^n \setminus \Diag_n \text{,}$$
where $\Diag_n \defeq \{(z_1, \dots, z_n) \mid z_i = z_j \text{ for some } i\neq j\}$ is the fat diagonal in $\mathbb{C}^n$.

\begin{theorem}\label{thm:K(PB_n,1)}
 The space $\CPB_n$ is a classifying space for $PB_n$.
\end{theorem}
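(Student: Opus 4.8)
The plan is to prove that $\CPB_n$ is a $K(PB_n,1)$, i.e. that $\pi_1(\CPB_n) \cong PB_n$ and that all higher homotopy groups vanish. Both facts follow from the classical fibration structure on ordered configuration spaces, so my main strategy is to build up the configuration spaces one point at a time and induct.

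First I would set up the Fadell--Neuwirth fibrations. For each $k \le n$, forgetting the last coordinate gives a map $\CPB_k \to \CPB_{k-1}$ sending $(z_1,\dots,z_k) \mapsto (z_1,\dots,z_{k-1})$. The key fact is that this is a fibration whose fiber over a configuration $(z_1,\dots,z_{k-1})$ is the set of allowed positions for the last point, namely $\mathbb{C} \setminus \{z_1,\dots,z_{k-1}\}$, a plane with $k-1$ punctures. I would verify (or cite) that this map is indeed a locally trivial fibration, which is the content of Fadell--Neuwirth.

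Next I would run the induction using the long exact sequence of homotopy groups for each such fibration, together with the fact that the punctured plane $\mathbb{C} \setminus \{k-1 \text{ points}\}$ is homotopy equivalent to a wedge of $k-1$ circles and is therefore aspherical (it is a $K(\pi,1)$ with free $\pi_1$ of rank $k-1$, and all its higher homotopy groups vanish). Starting from the base case $\CPB_1 = \mathbb{C}$, which is contractible, the long exact sequence shows inductively that $\pi_m(\CPB_k)$ vanishes for all $m \ge 2$: the fiber and base are both aspherical at each stage, so the total space is too. This establishes that $\CPB_n$ is aspherical. To identify the fundamental group, I would observe that the same long exact sequences give short exact sequences $1 \to \pi_1(\text{fiber}) \to \pi_1(\CPB_k) \to \pi_1(\CPB_{k-1}) \to 1$, and these assemble into exactly the iterated extension of free groups that presents $PB_n$; alternatively one simply quotes that $\pi_1$ of the ordered configuration space of $n$ points in the plane is by definition (or by a standard identification) the pure braid group $PB_n$.

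The main obstacle, and the only genuinely substantive point, is establishing that the forgetful maps are fibrations rather than merely surjections — i.e. the local triviality in the Fadell--Neuwirth theorem. Once that is in hand, the homotopy-theoretic bookkeeping with the long exact sequence is routine. Since this theorem is stated as a ``classical fact'' in the introduction, I expect the cleanest write-up to cite Fadell--Neuwirth for the fibration and the standard reference (e.g. Birman \cite{birman05}) for the identification $\pi_1(\CPB_n) \cong PB_n$, and then spend the bulk of the argument on the short inductive asphericity computation sketched above.
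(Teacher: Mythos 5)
Your proposal is correct: the Fadell--Neuwirth fibration induction you sketch is the standard proof of this classical fact, and the one genuinely substantive input (local triviality of the coordinate-forgetting maps) is correctly identified and attributed. The paper itself offers no proof at all---it states the theorem as well known, citing Birman \cite{birman05}---so your write-up simply supplies the argument that citation stands in for, and the asphericity bookkeeping via the long exact sequence (aspherical punctured-plane fibers over an inductively aspherical base) is carried out correctly.
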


If we quotient out the action of the symmetric group $\Sigma_n$ on the coordinates in $\mathbb{C}^n$, yielding configurations of $n$ \emph{un}ordered points in the plane, then we obtain a classifying space for the (usual, non-pure) braid group $B_n$.

\subsection{Belk's space}\label{sec:classifying_space_F}

We now define the space $\CF$ constucted in Belk's thesis \cite{belk04}. Let $\CF_n$ denote the space of all $n$-tuples of real numbers, $(t_1, \dots, t_n)$, such that:
\begin{enumerate}
 \item the entries are non-decreasing, i.e., $t_1 \leq \dots \leq t_n$, and
 \item for all $i = 1, \dots, n-2$, $t_{i+2}-t_i \geq 1$.
\end{enumerate}
This second condition should be thought of as saying that no three distinct entries are too close together.  For example, $(1,1,2)$ is a point in $\CF_3$, but $(1,1,3/2)$ is not. Let
$$\CF \defeq \left(\coprod\limits_{n=1}^\infty \CF_n\right)/\sim$$
denote the disjoint union of the spaces $\CF_n$, subject to the identifications
	$$(t_1, \dots, t_i, \dots, t_n) \sim (t_1, \dots, t_i, t_i, \dots, t_n)$$
whenever $(t_1, \dots, t_i, t_i, \dots, t_n) \in \CF_{n+1}$, that is whenever $t_i$ is at least~$1$ away from its neighbors.

Notice that for each $n$, $\CF_n$ is contractible, for instance to the point $p_n \defeq (1, \dots, n)$ in $\CF_n$. The contraction is given by the homotopy that at time $0 \leq t \leq 1$ takes the point $\vec{t} \defeq (t_1, \dots, t_n)$ to $p_nt + (1-t)\vec{t}$.  Heuristically, the fundamental group of $\CF$ (which will be $F$) will come from the identifications arising from $\sim$.  Points in $\CF_n$ are identified with points in $\CF_{n+1}$ in at most~$n$ ways, once for each allowed bifurcation of an entry in $\CF_n$, so non-trivial elements of $\pi_1(\CF)$ can arise, for example, by splitting one entry and then merging into a different entry.  Then non-trivial relations in $\pi_1(\CF)$ arise from the fact that splits and merges that are far enough apart may happen in any order.

\medskip

The rest of the paper is devoted to proving Belk's claim:

\begin{theorem}\label{thm:CF}
 The space $\CF$ is a classifying space for $F$.
\end{theorem}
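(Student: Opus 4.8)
The plan is to deduce the theorem from the properties of $X$ rather than analyzing $\CF$ in isolation. Since $X$ is a $\CAT(0)$ cube complex it is contractible, and $F$ acts on it freely (and properly discontinuously), so $X \to X/F$ is a universal covering with deck group $F$ and $X/F$ is a $K(F,1)$. Because a $K(F,1)$ is determined up to homotopy equivalence, the entire theorem reduces to producing a homotopy equivalence $X/F \simeq \CF$. I would therefore make no attempt to compute $\pi_1(\CF)$ or to prove asphericity of $\CF$ directly (which is essentially Belk's route), and instead concentrate entirely on comparing the two spaces geometrically.

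First I would set up a combinatorial dictionary between the cells of $X/F$ and the natural strata of $\CF$. Stratify $\CF$ by combinatorial type: record which blocks of consecutive entries of a representative $(t_1,\dots,t_n)$ are equal (the \emph{merge pattern}) and which of the inequalities $t_{i+2}-t_i\ge 1$ are equalities (the \emph{walls}). The crucial observation is that the separation condition $t_{i+2}-t_i\ge 1$ is exactly what forbids three entries from interacting at once, so that two merges (or splits) occurring at positions far enough apart are independent and may be performed in either order. This independence is precisely the mechanism producing the cubes of the Stein--Farley complex: a $k$-cube of $X/F$ should correspond to a choice of $k$ mutually independent elementary split/merge moves, matching a codimension-$k$ family of walls in $\CF$. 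I would make this correspondence explicit and $F$-equivariant already at the level of $X$, so that it descends to $X/F$.

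With the dictionary in hand I would construct an explicit cellular map $\Phi\colon \CF \to X/F$ sending a configuration to the cell determined by its combinatorial type and using the normalized coordinates $t_i$ to place the point inside the corresponding cube. The essential checks are that $\Phi$ is well defined on $\sim$-classes --- doubling an isolated entry $t_i \mapsto (t_i,t_i)$ lands in the same cell because such a doubling is a degenerate (identity) expansion in the Stein--Farley picture --- and that $\Phi$ is continuous as a wall $t_{i+2}-t_i$ degenerates to $1$ and as a merge $t_i=t_{i+1}$ forms or breaks. Because both $\CF$ and $X/F$ are assembled by gluing contractible pieces indexed by the same poset of combinatorial types, and $\Phi$ carries each closed stratum of $\CF$ (which is contractible) to the corresponding closed cube of $X/F$ (also contractible) compatibly with the face relations, a standard induction over skeleta upgrades these cellwise equivalences to a global homotopy equivalence. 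An explicit homotopy inverse can be written down by selecting, in each stratum, a canonical configuration and using the straight-line contractions already available on the pieces $\CF_n$ toward $p_n=(1,\dots,n)$.

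The main obstacle I anticipate is the combinatorial bookkeeping at the interface of the two descriptions: proving that sets of mutually independent split/merge moves, governed by the separation condition $t_{i+2}-t_i\ge 1$, are in a face-relation-preserving bijection with the cubes of $X$, and that this bijection is $F$-equivariant so that it descends cleanly to the quotient. I expect this matching, rather than any single analytic step, to be the technical heart of the argument; once it is in place, the contractibility of the individual cells on both sides makes the passage to a homotopy equivalence essentially formal.
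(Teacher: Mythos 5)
Your global strategy coincides with the paper's: both abandon any direct computation of $\pi_1(\CF)$ or asphericity and instead reduce everything to a homotopy equivalence between $\CF$ and $X/F$, with the contractibility of Stein's space and the freeness of the $F$-action doing the group-theoretic work. Where you diverge is the direction and mechanism of the comparison: the paper builds an explicit map $C\colon X\to\CF$ (via weighted elementary forests), shows it induces a homeomorphism of $X/F$ onto an explicit subspace $\DF\subseteq\CF$ (Lemma~\ref{lem:embedding_works}), and then homotopes $\CF$ onto $\DF$ by elementary moves (Proposition~\ref{prop:retract_to_CF}); you instead propose a map $\Phi\colon\CF\to X/F$ defined stratum-by-stratum and a homotopy-colimit-style induction over skeleta.

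There is a genuine gap in your combinatorial dictionary, and it sits exactly where your ``crucial observation'' overstates what the separation condition gives you. The condition $t_{i+2}-t_i\ge 1$ does \emph{not} force partially performed moves to be independent: the point $(0,\,3/5,\,6/5)\in\CF_3$ satisfies it, yet both consecutive gaps equal $3/5<1$. In the cube coordinates of $X/F$, a weight $w\in(0,1)$ on a split caret produces a gap of length $w$ that is necessarily flanked by gaps of length exactly $1$ (since distinct components of an elementary forest contribute $L_{i+1}-R_i=1$); this is precisely condition (3) defining $\DF$. Consequently no cube of $X/F$ contains a point realizing two adjacent sub-unit gaps, and your prescription ``send a configuration to the cell determined by its combinatorial type and use the normalized coordinates to place it in the cube'' has no well-defined target on such configurations. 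Relatedly, your proposed walls are miscalibrated: cube faces in $X/F$ are cut out by weights hitting $0$ or $1$ (gaps degenerating to $0$ or saturating at $1$), not by the equalities $t_{i+2}-t_i=1$; a wall stratum such as $(0,1/2,1)$ corresponds to no cell at all. The missing idea is the paper's first homotopy step: multiply all coordinates by $2$, after which $t_{i+2}-t_i\ge 2$ forces every sub-unit gap to be isolated; combined with translating to $t_1=1$ and compressing, left to right, every gap exceeding $1$ down to $1$, this homotopes $\CF$ onto $\DF\cong X/F$ explicitly. With that rescaling in place your stratification becomes correct on the retracted subspace, and the skeleta induction (which would anyway require care, since your closed strata are non-compact and the gluing argument needs the face-compatibility verified on all intersections) is no longer needed.
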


\section{Stein's space}\label{sec:stein}

Rather than directly calculating $\pi_1(\CF)$ and showing that $\CF$ is aspherical, we will take a known classifying space of $F$, which will be denoted $\overline{X}$ below, and map it into $\CF$. We will show that this map is a homeomorphic embedding, and then show that $\CF$ homotopes to the image of this embedding. The space $\overline{X}$ will be the quotient $X/F$, where $X$ is a $\CAT(0)$ cube complex on which $F$ acts freely. This space was first introduced by Stein \cite{stein92}, and subsequently used by Brown \cite{brown92}, among others; it was shown to be $\CAT(0)$ by Farley \cite{farley03}. We will call $X$ \emph{Stein's space}, since it was first built by her, though one could also call it the Stein--Brown--Farley complex.

\noindent \textbf{Strand diagrams.} To describe $X$, it is convenient to use the \emph{strand diagram} model for elements of $F$, as in \cite{belk14}.  For our purposes, a strand diagram $\Delta$ is a finite directed graph embedded in the infinite ``strip'' $[0,\infty) \times [0,k]$ for some positive number $k$ satisfying the following properties:

\begin{enumerate}
 \item $\Delta \cap ([0,\infty) \times \{0\})$ is of the form $\{1,\dots,m\} \times \{0\}$ for some $m \in \N$. These points are degree $1$ or $2$ vertices of $\Delta$, called \emph{sources}.
 \item $\Delta \cap ([0,\infty) \times \{k\})$ is of the form $\{1,\dots,n\} \times \{k\}$ for some $n \in \N$. These points are degree $1$ or $2$ vertices of $\Delta$, called \emph{sinks}.
 \item All edges are oriented from top to bottom, with no horizontal edges, so that every maximal directed edge path flows from a source downward to a sink.
 \item Vertices of degree $2$ that are not sources or sinks must have one incoming edge and one outgoing edge.
 \item Every vertex that has degree greater than $2$ must be either trivalent, with at least one incoming edge and at least one outgoing edge, or have degree $4$, with exactly two incoming edges and two outgoing edges.
\end{enumerate}

We will picture the sources on top and the sinks on the bottom, so the interval $[0,k]$ is oriented with $0$ on top and $k$ on the bottom.

A vertex of $\Delta$ with two outgoing edges is called a \emph{split (vertex)}, and a vertex with two incoming edges is called a \emph{merge (vertex)}.  A vertex of degree $4$ is both, so we call such a vertex a \emph{merge-split}\footnote{Other texts, e.g., \cite{belk14}, disallow merge-splits.  We allow them here to permit elementary multiplication of a merge caret by a split caret, defined later.  Note though that merge-splits are unnecessary in general:  all merge-splits may be removed via reduction.}.  A connected subgraph consisting of a split vertex and its two downward edges is a \emph{split caret}, while a connected subgraph consisting of a merge vertex and its two upward edges is a \emph{merge caret}. Collectively, these are called \emph{carets}.  If there are $m$ sources and $n$ sinks, we call the strand diagram an $(m,n)$-\emph{strand diagram}. See Figure~\ref{fig:strandsa} for an example.

\begin{figure}[ht]
\includegraphics[width=4in]{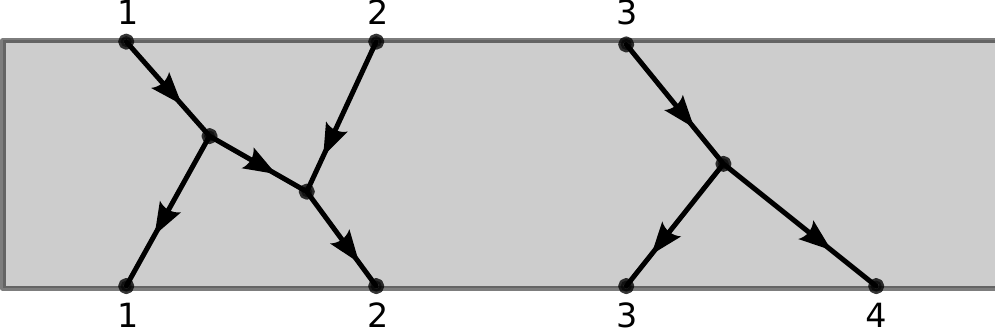}

\caption{A $(3,4)$-strand diagram.}\label{fig:strandsa}
\end{figure}

We will actually consider equivalence classes of strand diagrams, with equivalence relation $\sim_s$ given by the transitive closure of the following moves:

\begin{itemize}
 \item A homeomorphism of the strip $[0,\infty) \times [0,k]$ 
 \item Deletion or insertion of degree $2$ vertices on edges
 \item Vertical rescaling of the strip, i.e., varying $k$
 \item \emph{Reduction} and \emph{expansion} of the splits and merges
 \item \emph{Diagram reduction} and \emph{diagram expansion} of the sources and sinks.
\end{itemize}

A \emph{reduction} comes in two forms. First, if there is a merge whose outgoing edge is the incoming edge of a split, or the merge is a merge-split (so the ``edge'' between the merge and the split has length $0$), then we can replace this subgraph with two parallel edges.  Second, if there is a split whose two outgoing edges form the incoming edges of a merge, then we can replace this subgraph with a single edge. See Figure~\ref{fig:strandsb}, and also \cite[Figure~3]{belk14}. An \emph{expansion} is just the reverse procedure of reduction.

\begin{figure}[ht]
\includegraphics[width=2in]{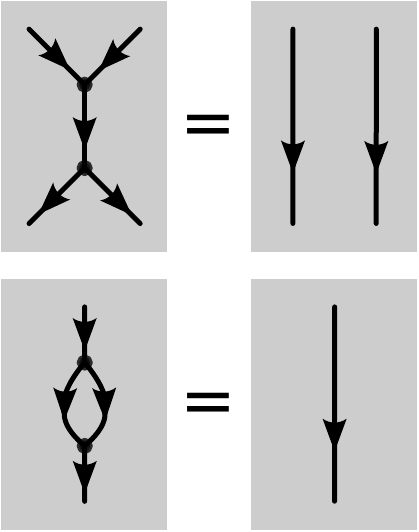}

\caption{From left to right, these pictures show the two forms of reduction of strand diagrams; from right to left, they show the two forms of expansion.}\label{fig:strandsb}
\end{figure}

A \emph{diagram reduction} of a source or sink in a strand diagram is defined as follows. Suppose we have a subgraph consisting of a split vertex and its three edges, the incoming one of which has a degree $1$ source $s$ as its other endpoint. Call the endpoints of the other two edges $v$ and $w$ (up to reduction we may assume $v\neq w$). Then a diagram reduction of this source $s$ amounts to replacing this subgraph (a tripod) with a split caret whose split vertex is now $s$ and whose other two vertices are still $v$ and $w$. This really just amounts to collapsing the edge from the source to the split vertex. Similarly a diagram reduction of a sink amounts to collapsing an edge connecting a merge vertex to a degree $1$ sink. Diagram expansion is the reverse of diagram reduction. As a remark, we will sometimes refer to diagram reduction/expansion as reduction/expansion, when the distinction is not important.

\medskip

The equivalence classes of strand diagrams under $\sim_s$ form a groupoid: we (right) multiply an $(m,n)$-strand diagram class by an $(n,p)$-strand diagram class by picking representatives and stacking the former on top of the latter, after which we obtain an $(m,p)$-strand diagram, as in Figure~\ref{fig:strand_mult}. We use $\ast$ to denote multiplication of strand diagrams up to equivalence. Thanks to the nature of $\sim_s$, inverse elements under $\ast$ are obtained simply by reflection about the ray $[0,\infty) \times \{k/2\}$. In particular the $(1,1)$-strand diagram classes form a group, which is exactly Thompson's group $F$. From this point on we will use the term ``strand diagram'' to mean an equivalence class under $\sim_s$.

\begin{figure}[ht]
\includegraphics[width=5in]{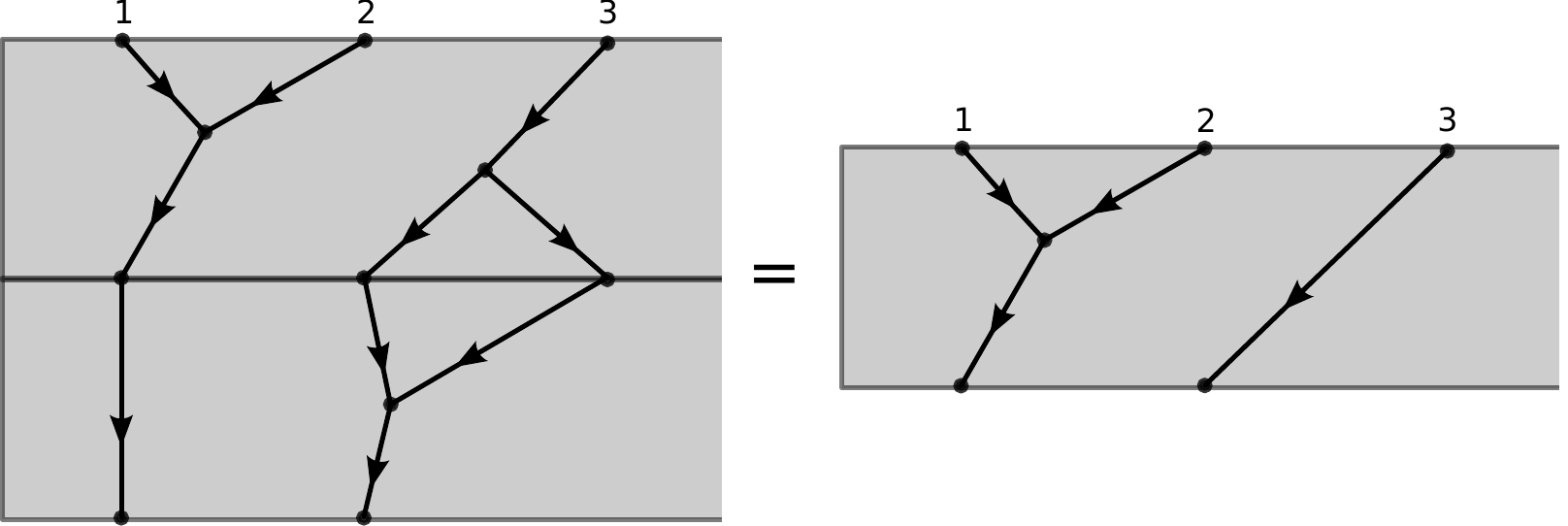}

\caption{A $(3,3)$-strand diagram times a $(3,2)$-strand diagram equals a $(3,2)$-strand diagram.}
\label{fig:strand_mult}
\end{figure}

\medskip

\noindent \textbf{Splitting and merging.} There are certain multiplications in the groupoid of all strand diagrams that are particularly important. If (some representative of an equivalence class of) a strand diagram $\Phi$ has no undirected loops we call it a \emph{forest}.  If $\Phi$ contains no merge vertices, and if every split vertex is a source, then it is called an \emph{elementary splitting forest}, and right multiplication by $\Phi$ is called an \emph{elementary splitting}.  The idea is that for a strand diagram $\Delta$, under elementary splitting by $\Phi$ each strand at the bottom of $\Delta$ could either continue downward unchanged, or it could split into two strands in $\Delta \ast \Phi$.  The inverse of an elementary splitting forest is called an \emph{elementary merging forest}, and right multiplication by such a forest is called \emph{elementary merging}.

If a strand diagram $\Delta'$ can be obtained from a strand diagram $\Delta$ by a sequence of elementary splittings then $\Delta'$ is \emph{obtained from $\Delta$ by splitting}.  For each $n\in \N$ consider the set $\Poset_{1,n}$ of $(1,n)$-strand diagrams, and let
      $$\Poset_1 \defeq \bigcup\limits_{n\in\N} \Poset_{1,n}\text{.}$$
This set has a partial ordering $\le$, given by $x\le y$ if $y$ is obtained from $x$ by splitting.

\begin{observation}\label{obs:poset_contractible}
 Any two elements of $\Poset_1$ have an upper bound, and hence $|\Poset_1|$ is contractible.
\end{observation}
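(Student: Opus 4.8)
The plan is to prove first the combinatorial upper-bound statement and then deduce contractibility by a standard directed-poset argument.

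For the upper bound, I would pass to reduced representatives and identify the relevant objects with trees. Each element of $\Poset_1$ is obtained from the trivial $(1,1)$-diagram by splitting, and splitting only ever grafts split carets below existing sinks and never creates a merge; hence in reduced form such a diagram is a finite rooted binary tree with $n$ leaves, equivalently a standard dyadic subdivision of the interval into $n$ pieces. Under this identification the relation $x \le y$ becomes ``the subdivision of $y$ refines that of $x$,'' i.e.\ the tree of $y$ is obtained from that of $x$ by grafting further carets onto its leaves. Given $x, y \in \Poset_1$, I would then form the diagram $z$ whose tree is the union of the trees of $x$ and $y$ inside the infinite rooted binary tree (equivalently, the dyadic subdivision whose set of breakpoints is the union of those of $x$ and $y$). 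This $z$ is again a finite rooted binary tree, it refines both $x$ and $y$, and is obtained from each of them by splitting, so $x \le z$ and $y \le z$. Iterating, every finite subset of $\Poset_1$ has a common upper bound.

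For contractibility, fix $z \in \Poset_1$ and consider the principal down-set $D_z \defeq \{w \in \Poset_1 : w \le z\}$. Since $z$ is a maximum of $D_z$, the order complex $|D_z|$ is a cone with apex $z$ and is therefore contractible. Now $|\Poset_1| = \bigcup_z |D_z|$, and by the upper-bound property the family $\{D_z\}$ is directed: given $z, z'$ and any common upper bound $w$, one has $D_z \cup D_{z'} \subseteq D_w$. Because $|\Poset_1|$ is a CW complex, by Whitehead's theorem it suffices to show all its homotopy groups vanish. Any continuous map $S^k \to |\Poset_1|$ has compact image, hence lands in a finite subcomplex $K$ whose finitely many vertices $x_1, \dots, x_r$ admit a common upper bound $z$; every chain among the $x_i$ is a chain in $D_z$, so $K \subseteq |D_z|$. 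As $|D_z|$ is contractible, the map is null-homotopic in $|D_z| \subseteq |\Poset_1|$, giving $\pi_k(|\Poset_1|) = 0$ for all $k$. Therefore $|\Poset_1|$ is contractible.

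I expect the main obstacle to be the bookkeeping in the first half: making the identification of splitting $(1,n)$-diagrams with binary trees fully rigorous while respecting the equivalence relation $\sim_s$, in particular verifying that splitting never introduces an irreducible merge, so that reduced elements of $\Poset_1$ really are merge-free trees and the union construction lands back inside $\Poset_1$. Once directedness is established, the topological second half is routine, and the whole argument reduces to the slogan that a directed poset has contractible order complex.
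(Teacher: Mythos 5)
There is a genuine gap at the very first step: you have misidentified the poset. Elements of $\Poset_1$ are \emph{arbitrary} $(1,n)$-strand diagrams, not only those obtainable from the trivial $(1,1)$-diagram by splitting, and in reduced form they generally do contain merge vertices. For example, any reduced nontrivial element of $F$, viewed as a $(1,1)$-strand diagram, lies in $\Poset_{1,1}$ and has both splits and merges; since elementary splitting strictly increases the number of sinks, such a diagram is not comparable to the trivial diagram at all, so your claim that every element is reached from it by splitting fails outright. A structural sanity check makes the same point: the paper needs $F$ to act freely and transitively on each $\Poset_{1,n}$, so $\Poset_{1,n}$ is infinite, whereas there are only Catalan-many binary trees with $n$ leaves. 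Thus the identification of $\Poset_1$ with dyadic subdivisions, on which your union-of-trees construction rests, collapses, and the obstacle you flagged at the end --- verifying that reduced elements of $\Poset_1$ are merge-free trees --- is not bookkeeping: it is false.

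The missing idea, which is precisely the first sentence of the paper's proof, is to first move \emph{up} in the poset to kill the merges: given $x \in \Poset_1$, a reduced representative has tree-pair form $T \ast U^{-1}$ with $T$ a tree of splits and $U^{-1}$ a forest of merges (merge-then-split configurations reduce, so splits can be pushed above merges). Right multiplying by the splitting forest $U$ cancels the merges by reduction, since $U^{-1} \ast U$ reduces to parallel strands, giving $x \ast U \sim_s T$; decomposing $U$ into elementary splitting forests shows $x \le T$ with $T$ merge-free. Once one has merge-free $x' \ge x$ and $y' \ge y$, your common-refinement construction applies verbatim and produces an upper bound for $x$ and $y$. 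Your topological second half --- directedness implies every compact subset lies in a contractible down-set cone $|D_z|$, so all homotopy groups vanish and Whitehead's theorem gives contractibility --- is correct and is exactly what the paper compresses into ``directed, and so has contractible geometric realization.''
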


\begin{proof}
 Let $x,y\in \Poset_1$. There exist $x'\ge x$ and $y'\ge y$ such that $x'$ and $y'$ have no merges. Then it is clear that $x'$ and $y'$ have an upper bound. The partially ordered set $\Poset_1$ is therefore directed, and so has contractible geometric realization.
\end{proof}

Since $F$ is the group of $(1,1)$-strand diagrams, there is an action of $F$ on $\Poset_1$ given by left multiplication, which extends to an action on the geometric realization $|\Poset_1|$. This action is free, and for each $n$ it is transitive on the vertex subset $\Poset_{1,n}$. Since $|\Poset_1|$ is contractible and the $F$-action is free, we immediately see that:
\begin{quote}\centering
 $|\Poset_1|/F$ is a classifying space for $F$.
\end{quote}
This is not quite the space that we want though.

\medskip

\noindent \textbf{Elementary simplices.} We wish to ``throw away'' many of the simplices in $|\Poset_1|$, i.e., chains in $\Poset_1$. Given a chain $x_0<x_1<\cdots <x_k$, call the chain \emph{elementary} if we obtain $x_k$ from $x_0$ by an elementary splitting. This condition is closed under taking subchains, and so the subspace $X$ of $|\Poset_1|$ consisting of the elementary simplices is a subcomplex. This subcomplex is clearly $F$-invariant.

The simplices in $X$ can be glommed together, giving $X$ the structure of a metric cube complex. For any $x<y$ an elementary chain, the chains of the form $x_0<\cdots<x_n$ with $x=x_0$ and $y=x_n$ form the simplices of an $n$-dimensional cube. We call $x$ the \emph{top} vertex of the cube and $y$ the \emph{bottom} vertex. The metric is realized by identifying each $n$-cube with the unit cube $[0,1]^n$. It may seem odd to call $x$ the top and $y$ the bottom, when $x<y$, but since elementary splitting happens at the bottom of the diagram, this terminology will be more in sync with pictures that occur later.

\begin{lemma}[\cite{brown92,stein92,farley03}]\label{lem:stein_space_contractible}
 $X$ is contractible.
\end{lemma}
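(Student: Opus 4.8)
The plan is to deduce contractibility from the $\CAT(0)$ geometry of $X$, reconstructing the route of Farley~\cite{farley03}. Since a $\CAT(0)$ space is uniquely geodesic and contracts to any chosen basepoint along its geodesics, it is enough to prove that $X$, with the piecewise-Euclidean metric described above, is $\CAT(0)$. By Gromov's link condition this reduces to two checks: that $X$ is simply connected, and that the link of every vertex is a flag simplicial complex. I would organize the proof around these two points.

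For the link condition, fix a vertex $v \in \Poset_{1,n}$ and describe its link concretely. The edges of $X$ at $v$ come in two flavors: \emph{ascending} edges, one for each leaf of $v$, which split that leaf; and \emph{descending} edges, one for each pair of adjacent leaves, which merge that pair. A clique of such edges spans a cube exactly when the corresponding elementary moves can be carried out simultaneously, i.e.\ when the split leaves are distinct, the merged pairs are disjoint, and no split leaf lies in a merged pair --- in short, when the leaf-supports of the moves are mutually disjoint. Because every obstruction to compatibility is of the form ``two moves share a leaf,'' pairwise compatibility forces joint compatibility, so the link is flag. (For instance, the descending part of the link is the matching complex of the path on the $n$ leaves, which is flag, while the ascending part is a full simplex.) Carrying out this bookkeeping carefully --- in particular tracking the reductions that occur when a merged pair is itself a split caret --- is precisely Farley's computation, and I would cite~\cite{farley03} for the full details.

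For simple connectivity I would invoke the directedness established in Observation~\ref{obs:poset_contractible}. Any loop in the $1$-skeleton of $X$ is a word in elementary splits and merges; using the square relations of the cube complex one can, by a standard confluence argument, commute all splits ahead of all merges, rewriting the loop up to homotopy as an ascent to a common upper bound of its vertices (which exists by directedness) followed by a descent back along the same cubes. Such a loop bounds a disk, so $\pi_1(X)=1$. Combined with the flag condition, Gromov's criterion gives that $X$ is $\CAT(0)$, hence contractible.

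The main obstacle is the flag-link verification: in contrast to the soft directedness argument, showing that pairwise-compatible elementary moves are always jointly compatible requires a genuine analysis of how splits and merges interact at overlapping leaves. It is worth emphasizing why one cannot shortcut this with the obvious discrete Morse function given by the number of leaves: there the descending links are matching complexes of paths, whose connectivity grows with $n$ (this is what drives the Brown--Stein finiteness results) but which are not contractible. Thus contractibility of $X$ is genuinely a global, $\CAT(0)$ phenomenon rather than something visible locally, and it is this point that makes Farley's metric analysis the natural tool.
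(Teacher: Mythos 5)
Your proposal is correct and takes essentially the paper's approach: the paper does not reprove this lemma but defers to \cite{brown92,stein92,farley03}, explicitly highlighting Farley's result that $X$ is a $\CAT(0)$ cube complex, and your sketch --- flagness of vertex links via pairwise disjointness of leaf-supports of elementary moves, plus simple connectivity via the directedness of $\Poset_1$ from Observation~\ref{obs:poset_contractible} --- is precisely a reconstruction of that Farley route. One small inaccuracy in your closing aside: matching complexes of paths are not uniformly non-contractible (by Kozlov's computation they alternate between contractible complexes and single spheres depending on the number of leaves mod $3$), though your broader point stands, namely that these descending links are not \emph{all} contractible, so the leaf-counting Morse function alone does not directly yield contractibility of $X$.
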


This has been proved in \cite{brown92,stein92,farley03}, and more recently using the language of strand diagrams in \cite{bux14}, so we will not repeat the proof here. For the interested reader, $X$ is not just contractible but is in fact a $\CAT(0)$ cube complex, as proved by Farley.

\begin{corollary}\label{cor:stein_classifying}
 $\overline{X} \defeq X/F$ is a classifying space for $F$. \qed
\end{corollary}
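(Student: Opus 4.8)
The plan is to invoke the standard covering-space characterization of classifying spaces: if a discrete group $G$ acts freely and properly discontinuously on a contractible CW complex $Y$, then $Y\to Y/G$ is the universal cover, so that $\pi_1(Y/G)\cong G$ (the deck group) and $\widetilde{Y/G}=Y$ is contractible, whence $Y/G$ is a $K(G,1)$. Here I take $Y=X$ and $G=F$. Lemma~\ref{lem:stein_space_contractible} already supplies the contractibility of $X$, so all the remaining work lies in verifying that the $F$-action on $X$ is free and properly discontinuous.

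First I would record that the $F$-action on $X$ is simply the restriction of the left-multiplication action on $|\Poset_1|$, which was already observed to be free. Since $X\subseteq|\Poset_1|$ is an $F$-invariant subcomplex, $F$ in particular acts freely on the vertex set $\Poset_{1,n}$ of $X$: a nontrivial $g\in F$ moves every $(1,n)$-strand diagram, because left multiplication by a nontrivial group element has no fixed points.

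The crucial step, and the one demanding care, is to upgrade freeness on vertices to a genuinely properly discontinuous cellular action with no inversions, so that the quotient map really is a covering. Here I would use that left multiplication by $g\in F$ preserves the partial order on $\Poset_1$: if $y$ is obtained from $x$ by an elementary splitting $\Phi$, which happens at the bottom of the diagram, then $g\ast y=(g\ast x)\ast\Phi$, so $x\le y$ forces $gx\le gy$, and likewise for $g^{-1}$; thus $g$ acts as an order isomorphism. Consequently $g$ carries each cube of $X$, which corresponds to an elementary interval $[x,y]$ with top vertex $x$ and bottom vertex $y$, to another such cube while preserving the minimum/maximum structure. If $g$ stabilizes a cube setwise, it must fix both its top vertex $x$ and its bottom vertex $y$; but the vertex action is free, so $g=1$. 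Hence no nontrivial element of $F$ stabilizes any cube, the cellular action has no cube-stabilizers, and therefore it is a properly discontinuous covering-space action on the CW complex $X$.

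Finally, with $F$ acting freely and properly discontinuously on the contractible, simply connected complex $X$, the quotient $X\to\overline{X}=X/F$ is the universal cover with deck transformation group $F$. Thus $\pi_1(\overline{X})\cong F$ and $\widetilde{\overline{X}}=X$ is contractible by Lemma~\ref{lem:stein_space_contractible}, which is precisely the definition of a classifying space for $F$. The only genuine obstacle is the no-inversion verification above; every other ingredient is a direct appeal to covering-space theory.
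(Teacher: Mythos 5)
Your proposal is correct and follows essentially the same route as the paper, which treats the corollary as immediate from Lemma~\ref{lem:stein_space_contractible} together with the freeness of the $F$-action on $\Poset_1$ (and hence on the invariant subcomplex $X$) noted just before the corollary. The extra work you do---checking that left multiplication is an order isomorphism, so a cube stabilizer must fix the cube's top vertex and is therefore trivial, making the cellular action a genuine covering-space action---is a correct and worthwhile filling-in of the covering-space details the paper leaves implicit.
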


In Section~\ref{sec:stein_to_belk} we will embed $\overline{X}$ into $\CF$ as a subspace. To do this, we first discuss a parameterization of $X$, by \emph{weighted strand diagrams}, which will allow us to describe the map $\overline{X}\hookrightarrow \CF$ very explicitly.

\medskip

\subsection{Generalized strand diagrams}\label{sec:gen_strand_diagrams}

We begin by defining \emph{weighted elementary forests}.  If a forest $\Phi$ has the property that every connected component is either an edge or a caret (either with a single split/source or with a single merge/sink), we call $\Phi$ \emph{elementary}. Elementary splitting forests and elementary merging forests are both examples of elementary forests, but elementary forests may contain both split carets and merge carets.  The idea is that when we take a strand diagram and right multiply by an elementary forest, each strand at the bottom of the diagram could do one of three things:
	\begin{enumerate}
	 \item It could continue downward unchanged,
	 \item It could split into two strands (if a split caret is attached to it), or
	 \item It could merge with a neighboring strand (if a merge caret is attached to it).
	\end{enumerate}
Right multiplying by an elementary forest is called an \emph{elementary multiplication}.

An elementary forest can always be decomposed into the product of an elementary splitting forest and an elementary merging forest, or into the product of an elementary merging forest and an elementary splitting forest. Given an elementary forest $\Phi$, let $\Phi_{split}$ be the forest obtained from $\Phi$ by replacing each merge caret in $\Phi$ with two parallel lines (hence ``undoing'' the merges of $\Phi$), and shifting the sinks as necessary. Similarly let $\Phi_{merge}$ be obtained by replacing each split caret with one edge (so undoing the splits of $\Phi$), and shifting the sinks as necessary. Then right multiplication by $\Phi$ is the same as right multiplication first by $\Phi_{split}$ and then by some elementary merging forest, and is also the same as right multiplication first by $\Phi_{merge}$ and then by some elementary splitting forest. (Note that we do not claim $\Phi=\Phi_{split} \ast \Phi_{merge}$; by considering the number of sources and sinks, this product is usually not even defined.)

Observe now that if $v$ is a vertex of $X$ and $\Phi$ is an elementary forest such that $v \ast \Phi$ exists (meaning that the number of sinks of $v$ equals the number of sources of $\Phi$), the vertices $v$ and $v \ast \Phi$ are opposite vertices of a cube in $X$, namely the cube with bottom vertex $v \ast \Phi_{split}$ and top vertex $v \ast \Phi_{merge}$.  The dimension of this cube equals the number of carets in $\Phi$.  It follows that an equivalent definition of $X$ is as the complex of all cubes corresponding to any elementary multiplication, not just elementary splittings (though many such cubes coincide).

Given an elementary forest $\Phi$, a \emph{weighting} on $\Phi$ is a map from the set of carets of $\Phi$ to the interval $[0,1]$.  We introduce an equivalence relation $\sim_f$ among weighted elementary forests corresponding to adding and deleting weight zero components.  Specifically, a given weighted elementary forest is \emph{equivalent} to the weighted elementary forest that is the result of replacing each split caret having weight $0$ with a single edge and each merge caret having weight $0$ with two parallel edges, and then shifting sink vertices appropriately so that the set of sinks remains a set of consecutive natural numbers starting at $1$.  Equivalence $\sim_f$ is then the transitive closure of this identification.

\begin{definition}[Generalized strand diagrams]\label{def:gen_strand}
 A \emph{generalized strand diagram} is a $(1,n)$-strand diagram for $n\in\N$ (i.e., a vertex of $X$), together with a single multiplication by a weighted elementary forest.
\end{definition}

Heuristically, the weight of each caret in a generalized strand diagram should be thought of as the percent of ``progress'' toward fully attaching that caret in the multiplication.  In particular, a usual elementary multiplication can be thought of as a weighted elementary multiplication in which all carets have weight $1$.  Note also that the equivalence relation $\sim_s$ for the usual strand diagrams can be extended to generalized strand diagrams.  The only moves that require explanation are reduction and expansion.  If a split (with weight $1$) is multiplied by a merge with weight $w$, that is equivalent to replacing the split-merge pair with a single split with weight $1-w$.  Similarly, if a merge is multiplied by a split with weight $w$, that is equivalent to replacing the merge-split pair with a single merge with weight $1-w$.  This is how reduction works, and expansion is again just the reverse.  The relations $\sim_s$ and $\sim_f$ together induce an equivalence relation $\sim_g$ on generalized strand diagrams. It is not difficult to see that any generalized strand diagram is equivalent under $\sim_g$ to a \emph{unique} generalized strand diagram to which no reduction move applies and for which no carets have weight $0$. Note that ``no carets have weight $0$'' may be vacuously satisfied, if there are no carets at all.

The vertices of $X$ are (equivalence classes of) generalized strand diagrams where the associated weighted elementary forests have weight $1$ on each caret.  We now want to extend this parameterization to all of $X$.  Let $K$ be a cube of dimension $n$ and $x$ a vertex of $K$, with $y$ the vertex opposite $x$ in $K$.  Say $y$ is obtained from $x$ via elementary multiplication by the elementary forest $\Phi$, which necessarily has $n$ carets.  By weighting $\Phi$ with different caret weightings, we get the coordinates of a cube that can be identified with $K$.  More precisely, if we identify $K$ with $[0,1]^n$, and if the weighting assigns to the $i^{\text{th}}$ caret of $\Phi$ the weight $w_i$ (with $0\leq w_i\leq 1$), then the resulting point $q$ in $K$ is given by the coordinates $(w_1,\dots,w_n)\in [0,1]^n$.  In this way, for a fixed initial vertex $x$ in $K$, we can assign to $q$ a unique generalized strand diagram $x \ast \Phi_q$, where $\Phi_q$ is $\Phi$ with the appropriate weightings on its carets. We call $x \ast \Phi_q$ the \emph{parameterization of $q$ in $K$ relative $x$}.

Note that for a given point $q$ in $X$, different choices of cubes $K$ containing $q$ and different choices of initial vertex $x$ in $K$ may \emph{a priori} yield different parameterizations, since $x$ always corresponds to the point $(0,\dots,0)$ and $y$ always corresponds to the point $(1,\dots, 1)$. However, we now claim that the parameterization by generalized strand diagrams is in fact unique, i.e., independent of $x$ and $K$, and varies continuously.

\begin{lemma}\label{lem:param_cts}
 The parameterization of $X$ by generalized strand diagrams is independent of choices of $x$ and $K$, and varies continuously over $X$.
\end{lemma}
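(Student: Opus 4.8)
The plan is to first identify exactly what the ambiguity in the parameterization is, and then to show that the reduction rules built into $\sim_g$ collapse all the competing generalized strand diagrams to a common form. Since (as observed just before the lemma) every generalized strand diagram has a \emph{unique} $\sim_g$-reduced representative, it suffices to prove that any two parameterizations of a fixed point $q\in X$ are $\sim_g$-equivalent; the well-defined assignment $P\colon X \to \{\text{reduced generalized strand diagrams}\}$ is then obtained by passing to this common reduced form, and continuity will be checked against this map.

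I would begin with independence of the initial vertex inside a single cube $K\cong[0,1]^n$, with top vertex $T$ at coordinates $(0,\dots,0)$ and bottom vertex $B$, so that $B$ is obtained from $T$ by splitting $n$ carets. Index the vertices of $K$ by subsets $S\subseteq\{1,\dots,n\}$, where $v_S$ is $T$ with exactly the carets in $S$ split, so $v_\emptyset=T$ and $v_{\{1,\dots,n\}}=B$. Writing $(s_1,\dots,s_n)$ for the canonical coordinates of $q$ in which $s_i=0$ at the top and $s_i=1$ at the bottom, the elementary forest joining $v_S$ to its opposite vertex has a merge caret in each direction $i\in S$ and a split caret in each direction $i\notin S$, and the parameterization relative to $v_S$ assigns direction $i$ the weight $s_i$ when $i\notin S$ and $1-s_i$ when $i\in S$. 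To compare $v_S\ast\Phi_q$ with the diagram $T\ast\Psi_q$ taken relative to the top vertex (where $\Psi_q$ splits direction $i$ with weight $s_i$), I expand $v_S=T\ast\Psi_S$: in each direction $i\in S$ this exhibits a fully split caret immediately followed by a merge caret of weight $1-s_i$, and the generalized reduction rule (a weight-$1$ split times a weight-$w$ merge equals a single split of weight $1-w$) replaces this pair by a single split of weight $s_i$. In each direction $i\notin S$ the diagram already carries a split of weight $s_i$. Hence $v_S\ast\Phi_q\sim_g T\ast\Psi_q$ for every $S$, giving independence of the initial vertex within $K$.

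Next I would treat independence of the cube. Each point of $X$ lies in the relative interior of a unique carrier cube $K_0$, and every cube containing $q$ contains $K_0$ as a face, so it is enough to compare the parameterization in a cube $K$ with the one in a face $K_0$ of $K$ through $q$. Choosing a common vertex $x$ of $K_0$ (hence of $K$) as initial vertex for both—permissible by the previous paragraph—the two forests $\Phi^{K}_q$ and $\Phi^{K_0}_q$ differ only in the directions frozen by the face. For each frozen direction $j$ the canonical coordinate $s_j$ is constant, equal to $0$ if $x$ is unsplit there and to $1$ if $x$ is split there; in either case the computation above shows the corresponding caret of $\Phi^{K}_q$ carries weight $0$. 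By $\sim_f$ such a weight-$0$ caret is deleted (a split caret becomes a single edge, a merge caret becomes two parallel edges), and after deleting all the frozen carets the forest $\Phi^{K}_q$ becomes exactly $\Phi^{K_0}_q$. Thus $P$ agrees on $K$ and $K_0$; iterating over the frozen directions, and over any two cubes through their common carrier $K_0$, yields full independence of $K$.

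Finally, for continuity, note that on a single closed cube $K\cong[0,1]^n$ the parameterization relative to a fixed vertex is literally $(w_1,\dots,w_n)\mapsto x\ast\Phi_{(w_1,\dots,w_n)}$, which varies continuously because the weights \emph{are} the cube coordinates; passing to the reduced form stays continuous precisely because a caret whose weight tends to $0$ is, by $\sim_f$, continuously absorbed rather than jumping. Since the parameterizations on different cubes agree on their common faces by the independence just established, and $X$ carries the weak topology coherent with its cubes (so a map is continuous as soon as its restriction to each closed cube is), the pasting lemma assembles these into a globally continuous $P$. I expect the independence-of-cube step to be the main obstacle: one must verify that $\sim_s$ and $\sim_f$ reconcile \emph{exactly} the discrepancies between parameterizations, and the linchpin throughout is the single identity that a weight-$1$ split followed by a weight-$w$ merge reduces to a weight-$(1-w)$ split. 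It is this identity, together with the deletion of weight-$0$ carets, that simultaneously forces well-definedness and makes the degenerations at cube faces continuous.
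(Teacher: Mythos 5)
Your proof is correct and follows essentially the same route as the paper: vertex-independence within a cube via rewriting $v_S \ast \Phi_q = T \ast \Psi_S \ast \Phi_q$ and reducing, cube-independence via weight-$0$ carets deleted by $\sim_f$ on faces, and pasting over closed cubes for continuity. The only difference is one of explicitness: where the paper notes $\Phi_{x'} = (\Phi'_x)^{-1}$ in the strand-diagram groupoid and then appeals to uniqueness of the parameterization relative to a fixed vertex, you carry out the split-merge reduction computation (weight-$1$ split times weight-$w$ merge equals weight-$(1-w)$ split) directly, and you additionally spell out the carrier-cube argument and why frozen directions carry weight $0$ --- details the paper's proof leaves implicit.
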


\begin{proof}
 For a given initial vertex $x$, the parameterization is continuous on closed cubes containing $x$ by definition.  Hence we need only show that the parameterization of a given $n$-cube $K$ is independent of the choice of initial vertex $x$, and then that these parameterizations agree on intersections of cubes.
 
 Let $x$ and $x'$ be two different choices of initial vertex in $K$ with associated elementary forests $\Phi$ and $\Phi'$, so $x \ast \Phi$ (respectively $x' \ast \Phi'$) is the opposite vertex to $x$ (respectively $x'$) in $K$.  For any point $q$ in $K$, let $(w_1(q), \dots, w_n(q))$ be the coordinates of $q$ in $K$ relative $x$, with $w'_i(q)$ defined similarly for $x'$. Let $\Phi_q$ be the weighted elementary forest obtained from $\Phi$ by assigning the weights $w_1(q), \dots, w_n(q)$ to the carets of $\Phi$. Define $\Phi'_q$ similarly. Hence $q$ is parameterized by $x \ast \Phi_q$ relative $x$, and by $x' \ast \Phi'_q$ relative $x'$.  Note that $x \ast \Phi_{x'}$ is just $x'$ as a (usual) strand diagram.  Similarly $x' \ast \Phi'_x = x$.  Hence we have
$$x = x \ast \Phi_{x'} \ast \Phi'_x \text{,}$$
so $\Phi_{x'}=(\Phi'_x)^{-1}$ in the groupoid of all strand diagrams.  This tells us that for any $q$,
$$\Phi_{x'} \ast \Phi'_q = (\Phi'_x)^{-1} \ast \Phi'_q$$
is a weighted elementary forest.

 Now we have $x' \ast \Phi'_q = x \ast \Phi_{x'} \ast \Phi'_q$, and since $\Phi_{x'} \ast \Phi'_q$ is a weighted elementary forest,  $x \ast \Phi_{x'} \ast \Phi'_q$ is a generalized strand diagram that parameterizes $q$ relative $x$. By uniqueness,
 $$x \ast \Phi_{x'} \ast \Phi'_q \sim_g x \ast \Phi_q \text{,}$$
 and we conclude that $x' \ast \Phi'_q \sim_g x \ast \Phi_q$.
 
 Finally, we need to show that the parameterizations agree on intersections of cubes.  Let $L$ be a face of $K$ and $x$ a vertex in $L$. Let $\Phi$ and $\Psi$ be such that the vertex $y \defeq x \ast \Phi$ is opposite $x$ in $K$ and the vertex $z \defeq x \ast \Psi$ is opposite $x$ in $L$.  For a point $q$ in $L$, it now suffices to show that $x \ast \Phi_q \sim_g x \ast \Psi_q$. But this is clear because $\Phi_q \sim_f \Psi_q$.
\end{proof}

The reader may find it helpful to imagine how generalized strand diagrams change while traveling through $X$. Moving in $X$ amounts to multiplication by weighted elementary forests (which includes elementary splitting and elementary merging).  Multiplying by a split caret that has weight $w$ should be thought of as splitting the associated strand into two strands, but where the two new strands have only moved distance $w$ apart (on their way to distance $1$), yielding a ``short'' caret.  Similarly, multiplying by a merge caret that has weight $w$ should be thought of as merging the corresponding two strands together, but not completely, only moving them a total distance of $w$ closer together.  When the weight on a caret is $0$, nothing is done, while if the weight on a caret is $1$, the corresponding split or merge is complete.

\section{From $\overline{X}$ to $\CF$}\label{sec:stein_to_belk}

We now wish to embed $\overline{X}$ into $\CF$ as a subspace $\DF$ and then homotope $\CF$ to $\DF$. Let $q$ be a point in $X$. Choose a cube $K$ containing $q$ and a vertex $x$ contained in $K$. Then $q$ is a generalized strand diagram obtained from $x$ by multiplication by a weighted elementary forest $\Phi$.  Number the connected components of $\Phi$ by $c_1, \dots, c_\ell$.  Then each $c_i$ is either a split caret, a merge caret, or an edge.  Each caret of $\Phi$ is given a weight as determined by the location of $q$ within $K$, say the caret $c_i$ has weight $w_i$.

We are now ready to define the configurations map $C \colon X \to \CF$.  It is given by
$$C(q) \defeq (L_1,R_1,L_2,R_2, \dots, L_\ell, R_\ell) \text{,}$$
where the $L_i$ and the $R_i$ are defined to be:
	$$L_i \defeq i + \sum\limits_{\stackrel{j = 1}{\text{$c_j$ a split}}}^{i-1} w_j + \sum\limits_{\stackrel{j = 1}{\text{$c_j$ a merge\textcolor{white}{l}}}}^{i-1} (1-w_j)$$
and
	$$R_i \defeq i + \sum\limits_{\stackrel{j = 1}{\text{$c_j$ a split}}}^{i} w_j + \sum\limits_{\stackrel{j = 1}{\text{$c_j$ a merge\textcolor{white}{l}}}}^{i} (1-w_j) \text{.}$$

Note that $L_1=1$. Also note that differences of consecutive entries are given by: $L_{i+1}-R_i=1$ for all $i$, and $R_i-L_i$ is either $1$, $w_i$ or $1-w_i$.

\begin{figure}[ht]
\centering
\hspace{-.3in}
$\vcenter{\hbox{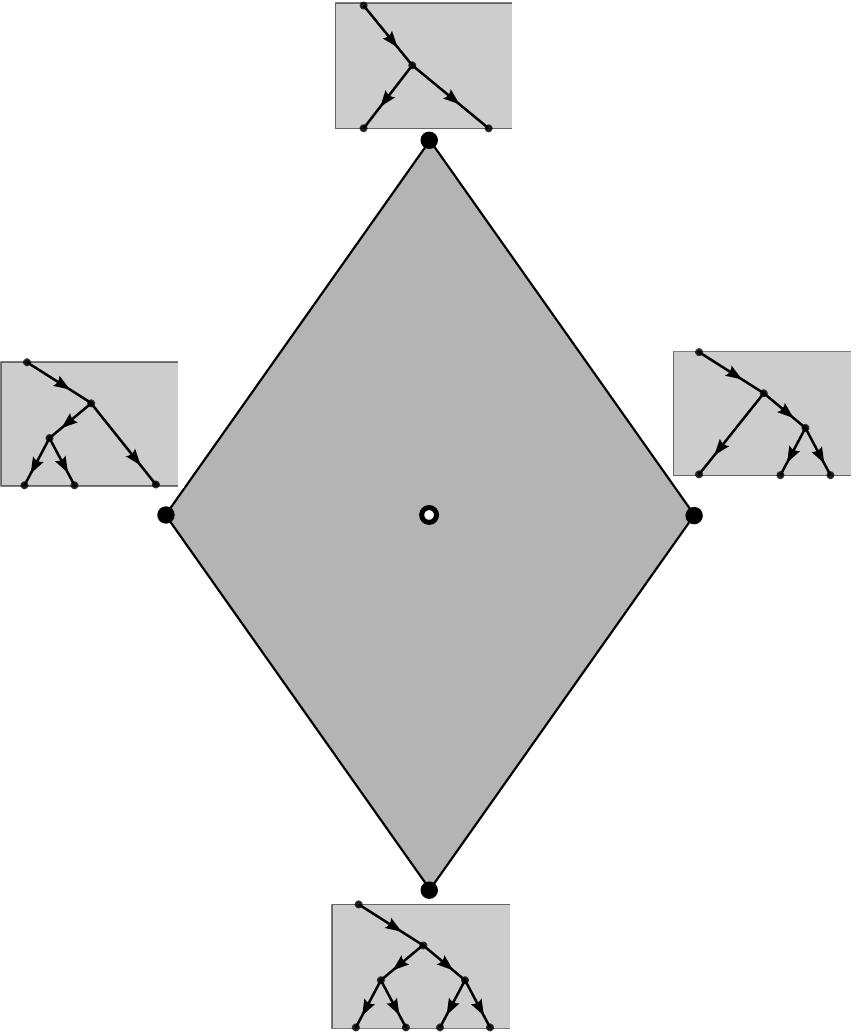}}$
\qquad
$\vcenter{\hbox{$\buildrel{C}\over{\longrightarrow}$}}$
\qquad
$\vcenter{\hbox{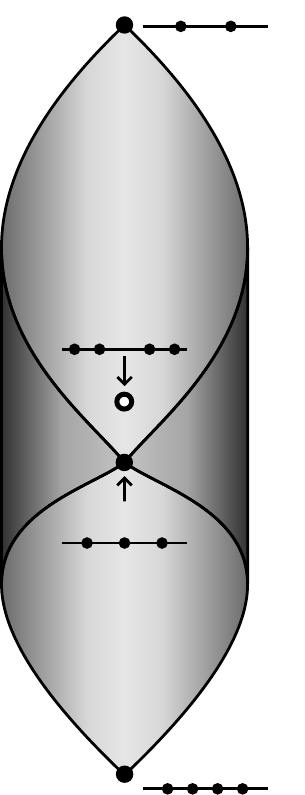}}$
\hspace{.6in}
\caption{The configurations map. Here, $x[i,j]$ denotes the generalized strand diagram obtained by multiplying the top vertex $x$ (labeled $x[0,0]$) by the weighted elementary forest $\Phi$ that has exactly two split carets, with weight $i$ and $j$, respectively.}
\label{fig:cubemap}
\end{figure}

Note that if we always start with the top vertex of a cube then there are no merge carets, so that term drops out of $L_i$ and $R_i$.  Moreover, if we only look at maximal cubes then every component of $\Phi$ will be a caret.  Thus, if we start with top vertices of maximal cubes then $L_i \defeq i + \sum\limits_{j = 1}^{i-1} w_j$ and $R_i \defeq i + \sum\limits_{j = 1}^{i} w_j$. See Figure~\ref{fig:cubemap} for an example of such a situation.

\begin{lemma}\label{lem:C_well_def}
 The map $C$ is well defined.
\end{lemma}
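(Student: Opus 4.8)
The plan is to check the two things that ``well defined'' requires: that the tuple $C(q)$ is genuinely a point of $\CF$, and that it is unchanged if we pick a different cube $K$ or a different initial vertex $x$ (once these are fixed, the forest $\Phi$ and its weights are determined). That $C(q)\in\CF$ I would read straight off the formula. The two differences recorded just after the definition, namely $L_{i+1}-R_i=1$ for all $i$ and $R_i-L_i\in\{w_i,\,1-w_i,\,0\}\subseteq[0,1]$ according as $c_i$ is a split, a merge, or an edge, are exactly what is needed: writing the entries of $C(q)$ as $s_1\le s_2\le\cdots\le s_{2\ell}$, the first identity together with $R_i-L_i\ge 0$ makes the sequence non-decreasing, and each second-neighbour gap $s_{m+2}-s_m$ equals $1+(R_i-L_i)\ge 1$ for the appropriate $i$. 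Thus both defining conditions of $\CF_{2\ell}$ hold and $C(q)$ lands in $\CF$. This step is routine.

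For independence of the choices I would follow the two-step pattern of Lemma~\ref{lem:param_cts}. First fix $K$ and vary $x$; by connectivity of the vertex set of $K$ it suffices to compare two adjacent vertices $x,x'$, differing in a single coordinate direction $k$. Every component $c_j$ with $j\ne k$ keeps its type and its weight, while the $k$-th caret switches between a split of weight $w_k$ and a merge of weight $1-w_k$. Since a split of weight $w_k$ contributes $w_k$ and a merge of weight $1-w_k$ contributes $1-(1-w_k)=w_k$ to the partial sums defining the $L_i$ and $R_i$, the tuple $C(q)$ is \emph{literally} unchanged. Hence $C(q)$ does not depend on the initial vertex within a fixed cube.

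Next I would vary $K$. Two cubes containing $q$ meet in a common face $L=K\cap K'$ which again contains $q$, so after choosing a vertex $x$ of $L$ (taken as the $(0,\dots,0)$-corner of $K$) it is enough to compare the computation in $K$ with the one in its face $L$, relative to the same $x$, and likewise for $K'$. The carets of $\Phi$ that disappear on passing to $L$ are precisely those of weight $0$ at $q$. A weight-$0$ split is exchanged with an edge, and both contribute $R_i-L_i=0$, so nothing changes; a weight-$0$ merge is exchanged with two parallel edges, which replaces a single pair $(a,a+1)$ in the tuple by the four entries $a,a,a+1,a+1$, a duplication absorbed exactly by the relation $\sim$ on $\CF$. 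Applying this over $L$ to both $K$ and $K'$, and using the previous paragraph inside each cube, shows $C(q)$ is independent of all choices.

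I expect the cube-versus-face step to be the main obstacle, because it is the only place where the length of the tuple actually changes and where the equivalence $\sim$ on $\CF$ is genuinely used; the care lies in checking that, when a weight-$0$ merge is traded for two edges, the inserted duplicated entries and all subsequent coordinates line up so that the two outputs are honestly $\sim$-equivalent (here the recursion $L_{\text{next}}=R_{\text{prev}}+1$ guarantees that everything past the altered block agrees). By contrast, the remaining moves in $\sim_g$ coming from $\sim_s$ (a homeomorphism of the strip, inserting or deleting degree-$2$ vertices, and vertical rescaling) do not alter the combinatorial data of components and weights, and so leave $C$ untouched.
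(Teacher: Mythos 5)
Your proposal is correct and takes essentially the same approach as the paper: membership in $\CF$ is read off the same difference identities, and your two key checks --- that a split of weight $w$ and a merge of weight $1-w$ contribute identically to every $L_i$ and $R_i$, and that weight-$0$ carets are harmless (weight-$0$ split versus edge, and weight-$0$ merge versus two parallel edges, with the duplicated pair $(a,a,a+1,a+1)$ absorbed by $\sim$) --- are precisely the reduction/expansion and $\sim_f$ computations in the paper's proof. The only organizational difference is that the paper invokes Lemma~\ref{lem:param_cts} to reduce immediately to invariance of $C$ under $\sim_g$, whereas you re-run that lemma's vertex-then-face reduction directly on $C$; the verifications performed are identical.
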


\begin{proof}
 By definition $L_{i+1}-L_i\ge 1$ and $R_{i+1}-R_i\ge 1$, so these tuples really are in $\CF$. As a remark, if $w_i = 1$ and $c_i$ is a merge caret, or if $c_i$ is an edge, then $L_i = R_i$. Hence $C(q)$ may have repeated entries, but as an element of $\CF$, $C(q)$ is equivalent to the tuple with the duplicates deleted.

 We need to show that $C$ is independent of the choices of $K$ and $x$, which by Lemma~\ref{lem:param_cts} amounts to showing that $C$ is well defined on equivalence classes of generalized strand diagrams under $\sim_g$.  The only moves that are non-trivial to check are reduction, expansion and the relation $\sim_f$ that adds or deletes carets with weight $0$. From the point of view of $C$, a reduction or expansion amounts to replacing either a split caret by a merge caret, or a merge caret by a split caret, and then in either case changing the weight $w$ on said caret to $1-w$. It is evident that no such moves will change any of the $L_i$ or $R_i$.
 
 Next note that the $L_i$ and $R_i$ cannot tell the difference between some $c_j$ being an edge or being a split caret with weight $0$. This shows that adding or deleting a split caret with weight $0$ does not change any $L_i$ or $R_i$. Finally suppose $c_j$ is a merge caret and $w_j=0$, so in particular $R_j=L_j+1$. Replace $c_j$ by a pair of edges, as allowed by $\sim_f$, and let $(L'_1,R'_1,\dots,L'_{\ell+1},R'_{\ell+1})$ be the resulting tuple (note that we now have $\ell+1$ components). For any $i<j$ we have $L'_i=L_i$ and $R'_i=R_i$. Also, $L'_j=R'_j=L_j$ and $L'_{j+1}=R'_{j+1}=R_j$. Finally, for $i>j+1$ we have $L'_i=L_{i-1}$ and $R'_i=R_{i-1}$. We conclude that $(L_1,R_1,\dots,L_\ell,R_\ell) = (L'_1,R'_1,\dots,L'_{\ell+1},R'_{\ell+1})$ as elements of $\CF$.
\end{proof}

\begin{lemma}[From strands to configurations]\label{lem:embedding_works}
 The map $C \colon X \to \CF$ is continuous, constant on $F$-orbits and induces an injection $\overline{C} \colon \overline{X} \hookrightarrow \CF$. Moreover $\overline{C}$ is a homeomorphism onto the image $C(X)$.
\end{lemma}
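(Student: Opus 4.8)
The plan is to establish the four claims in order: continuity, $F$-invariance, injectivity modulo $F$, and that the resulting injection is a homeomorphism onto its image. For continuity, I would use that $X$ carries the weak topology with respect to its closed cubes, so it suffices to check that $C$ restricted to each closed cube $K$ is continuous. Fixing the top vertex $x$ of $K$ and the associated elementary forest $\Phi$, Lemma~\ref{lem:param_cts} identifies $K$ with the weight cube $[0,1]^n$, and the defining formulas for $L_i$ and $R_i$ are affine in the weights $w_j$; composing with the continuous quotient $\coprod_n \CF_n \to \CF$ yields continuity of $C|_K$, and these agree on overlaps by Lemmas~\ref{lem:param_cts} and~\ref{lem:C_well_def}. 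For $F$-invariance, the key point is that the $F$-action is left multiplication on the base vertex: if $q$ is parameterized by $x \ast \Phi_q$, then $g\cdot q$ is parameterized by $(g\ast x)\ast \Phi_q$, with the \emph{same} weighted forest $\Phi_q$. Since the formulas defining $C$ depend only on the component types and weights of $\Phi_q$ and not on the base vertex, $C(g\cdot q)=C(q)$, so $C$ descends to $\overline{C}\colon \overline{X}\to\CF$.

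The main structural observation for injectivity is that the $F$-orbit of $q$ is already determined by the weighted forest $\Phi_q$ alone. Indeed, $F$ acts freely and transitively on each vertex set $\Poset_{1,n}$, so for $x\in\Poset_{1,n}$ we have $\{g\ast x : g\in F\}=\Poset_{1,n}$, whence the orbit of $q=x\ast\Phi_q$ equals $\{y\ast\Phi_q : y\in\Poset_{1,n}\}$, depending only on $\Phi_q$. Thus injectivity of $\overline{C}$ reduces to recovering the reduced weighted forest $\Phi_q$ from the $\CF$-class of $C(q)$. I would do this by reading off the gap pattern: consecutive differences satisfy $L_{i+1}-R_i=1$ while $R_i-L_i\in\{w_i,\,1-w_i,\,1\}$, so the block structure of the tuple recovers the sequence of component types together with their weights. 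The delicate point, and the step I expect to be the main obstacle, is that gaps equal to exactly $1$ or to $0$ are a priori ambiguous (a unit gap could be an edge, a full split, or an empty merge; a zero gap a full merge or an empty split), and one must verify that these ambiguities coincide precisely with the identifications $\sim_g$ and $\sim_f$ together with the duplicate-deletion relation defining $\CF$, so that the reduced $\Phi_q$ is genuinely well defined from $C(q)$.

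Finally I would upgrade the continuous bijection $\overline{C}\colon\overline{X}\to \DF$ to a homeomorphism onto $\DF=C(X)$. A first, clean step is that each closed cube embeds: $C|_K$ is a continuous injection from a compact cube into the Hausdorff space $\CF$ (injectivity because the affine coordinates recover each caret weight as $w_j=R_j-L_j$ for a split and $w_j=1-(R_j-L_j)$ for a merge), hence a homeomorphism onto $C(K)$. However, because $\overline{X}$ is neither compact nor locally finite (it has a single vertex per level yet cubes of every dimension meeting each vertex), this does not by itself give a homeomorphism onto the image, and one must show that the subspace topology on $\DF$ agrees with the weak topology transported from $\overline{X}$. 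I would accomplish this by exhibiting an explicit inverse $D\colon\DF\to\overline{X}$ sending a configuration to the reconstructed reduced weighted forest of the previous paragraph and thence to the corresponding point of $\overline{X}$, and then checking that $D$ is continuous. The only nontrivial continuity is at configurations where a gap crosses the value $1$ or $0$, equivalently where a caret weight reaches $0$ or $1$; these are exactly the cube-face identifications, and the second main technical hurdle is to confirm that the reconstruction is compatible with the face gluings of the cube complex, so that $D$ matches the gluing and $\overline{C}$ is open onto $\DF$. The affine, injective behaviour of $C$ on each cube, combined with the fact that reductions correspond to passing to faces, should make this step routine but care-demanding.
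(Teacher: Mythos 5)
Your proposal is correct in substance, matches the paper on the easy claims, and takes a genuinely more constructive route on the two delicate ones. Continuity and $F$-invariance are handled exactly as in the paper, which simply cites Lemmas~\ref{lem:param_cts} and~\ref{lem:C_well_def} and observes that $F$ acts on the left while the weighted forest sits on the right. For injectivity the paper argues by counting: it handles vertex orbits directly, then for non-vertices chooses the carrier cube $K_i$ with top vertex $x_i$ and asserts that $C(q_i)$ has precisely twice as many distinct entries as $C(x_i)$, deducing $C(x_1)=C(x_2)$, hence $F.x_1=F.x_2$ and then $F.q_1=F.q_2$. Your reconstruction of the weighted forest from the gap pattern is finer and in fact more robust: the paper's ``twice as many'' claim fails as literally stated whenever the carrier's forest has edge components (e.g.\ $x$ with three sinks and a single split caret on the middle strand gives four distinct entries versus three), whereas your analysis---reduced tuple length equals $m+k$ and gaps lying strictly in $(0,1)$ count the carets $k$---recovers $m$ and $\Phi_q$ correctly, effectively repairing that step. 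Moreover, the ambiguity you flag as your main obstacle (unit gaps versus zero gaps) evaporates if you adopt the paper's normalization: parameterize $q$ relative to the top vertex of its \emph{carrier}, so $\Phi_q$ has no merge carets and all weights lie in $(0,1)$; then gaps in $(0,1)$ are split weights, gaps of exactly $1$ separate components, and zero gaps are edges, with no $\sim_g$ case analysis needed. For the homeomorphism onto the image the paper is terse (openness is said to ``follow from the definition''), so your explicit inverse $D\colon \DF\to\overline{X}$ with a continuity check at the face transitions where weights hit $0$ or $1$ is a legitimate, more rigorous substitute. One factual slip: $\overline{X}$ \emph{is} locally finite---a vertex with $n$ sinks meets only finitely many cubes, since there are finitely many elementary forests on a number of strands bounded in terms of $n$---so your parenthetical justification is wrong, though your underlying caution is right for a different reason: $\overline{X}$ is not compact, so a continuous bijection onto $\DF$ is not automatically a homeomorphism, and an argument such as your inverse map is genuinely required.
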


\begin{proof}
 That $C$ is continuous follows from Lemmas~\ref{lem:param_cts} and~\ref{lem:C_well_def}. The action of $F$ on $X$ preserves the number of sinks and the weights of a generalized strand diagram (since $F$ acts on the left and generalized strand diagrams have multiplication by a weighted elementary forest on the right), so $C$ is constant on $F$-orbits. Hence we get a map $\overline{C} \colon \overline{X} \to \CF$.
 
 We next claim that $\overline{C}$ is injective. For each $n$ there is exactly one $F$-orbit in $\overline{X}$ of vertices with $n$ sinks, and said orbit maps to $(1,2,\dots,n)$ under $\overline{C}$, so $\overline{C}$ is visibly injective on vertex orbits in $\overline{X}$. Now suppose $q_1,q_2\in X$ are not vertices, and say $C(q_1)=C(q_2)$. Choose cubes $K_1,K_2$ such that $q_i$ lies in the interior of $K_i$ ($i=1,2$), and let $x_i$ be the top vertex of $K_i$. The tuple $C(q_i)$ has precisely twice as many distinct entries as $C(x_i)$, which tells us that $C(x_1)=C(x_2)$ and hence $F.x_1=F.x_2$. This then implies that $F.q_1=F.q_2$, proving the claim.
 
 The last thing to show is that $\overline{C}$ is a homeomorphism onto its image $C(X)$, and so it suffices to show that it is an open map. This follows from the definition (and continuity) of $\overline{C}$.
\end{proof}

The next step is to homotope $\CF$ to its subspace $C(X)$. Consider the subspace $\DF \subseteq \CF$ consisting of those points in $\CF$ that can be represented by tuples $(t_1,\dots,t_n)$ satisfying the additional requirements for all relevant $i$:

\begin{enumerate}
 \item $t_1=1$,
 \item $t_{i+1}-t_i \le 1$ and
 \item if $t_{i+1}-t_i < 1$ then $t_i - t_{i-1} = 1$ and $t_{i+2} - t_{i+1} = 1$.
\end{enumerate}

We claim $\DF=C(X)$. Indeed, $L_1=1$, and $R_i-L_i \le 1$ and $L_{i+1}-R_i=1$ for all $i$, where $(L_1,R_1,\dots,L_\ell,R_\ell)$ is any point in $C(X)$, so we have $C(X)\subseteq \DF$.  To see that $\DF \subseteq C(X)$, note that if $c_i$ is a split caret, then $R_i-L_i=w_i$, so it is easy to produce a generalized strand diagram mapping under $C$ to an arbitrary point of $\DF$.

\begin{proposition}\label{prop:retract_to_CF}
 The space $\CF$ is homotopy equivalent to its subspace $\DF$.
\end{proposition}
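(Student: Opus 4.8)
The plan is to exhibit an explicit strong deformation retraction $H\colon \CF\times[0,1]\to\CF$ of $\CF$ onto its subspace $\DF=C(X)$; since a (strong) deformation retraction is in particular a homotopy equivalence, this proves the proposition. Throughout I would work with a representative tuple through its \emph{gaps} $g_i\defeq t_{i+1}-t_i$, observing that the defining inequality of $\CF$ is exactly $g_i+g_{i+1}\ge 1$, and carry out the retraction as a concatenation of three homotopies, each preserving the conditions already achieved. The first homotopy enforces condition~(1): send $\vec t$ at time $u$ to $\vec t-u(t_1-1)$, so that $t_1=1$ at $u=1$; this is continuous, commutes with the identifications $\sim$ (translation is $\sim$-equivariant), and fixes $\{t_1=1\}\supseteq\DF$. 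The second enforces condition~(2) by capping each gap at $1$ via $g_i\mapsto g_i-u\max(g_i-1,0)$. Since capping never pushes a gap strictly below $1$, one checks $g_i(u)+g_{i+1}(u)\ge 1$ for all $u$, so we remain in $\CF$; and it fixes the locus where every gap is $\le 1$, which contains $\DF$.

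The substance is the third homotopy. I would pass to the \emph{slack} coordinates $s_i\defeq 1-g_i\in[0,1]$, under which the $\CF$ constraint reads $s_i+s_{i+1}\le 1$ and condition~(3) of $\DF$ becomes precisely the statement that no two consecutive slacks are positive. Define the retraction in these coordinates by
$$
r(s)_i \defeq \max\bigl(0,\; s_i-\max(s_{i-1},s_{i+1})\bigr),
$$
with the convention $s_0=s_n=0$ at the two ends, and use the straight--line homotopy $s_i(u)=(1-u)s_i+u\,r(s)_i$. The key algebraic point is that $r(s)$ already lies in $\DF$: if $r(s)_i>0$ and $r(s)_{i+1}>0$, then $s_i>\max(s_{i-1},s_{i+1})\ge s_{i+1}$ and $s_{i+1}>\max(s_i,s_{i+2})\ge s_i$, a contradiction, so no two consecutive slacks of $r(s)$ are positive. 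Because $r(s)_i\le s_i$, the straight--line homotopy only decreases slacks, hence never leaves the region $s_i+s_{i+1}\le 1$ defining $\CF$; and on $\DF$ each positive $s_i$ already has both neighbours equal to $0$, so there $r(s)=s$ and the homotopy is stationary. Concatenating the three stages thus yields a strong deformation retraction of $\CF$ onto $\DF=C(X)$.

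The main obstacle is \emph{not} any single estimate but the bookkeeping showing that these formulas are genuinely defined on the quotient $\CF$, rather than merely on $\coprod_n\CF_n$, and descend to continuous self-maps there. The delicate loci are doubled isolated points (a gap $0$, i.e.\ slack $1$, flanked by gaps $\ge 1$) and, after capping, configurations containing a gap equal to exactly $1$, where $\sim$ changes the number of coordinates. I would check that near such a point the slacks read $(\dots,0,1,0,\dots)$ and are fixed by $r$, so that the inserted slack-$1$ coordinate of the doubled representative is carried along in exact agreement with the merged representative, and that the boundary convention $s_0=s_n=0$ matches the vacuous boundary cases of condition~(3). Once each stage is seen to respect $\sim$, continuity of the induced map $\CF\to\CF$ follows from its continuity on each $\CF_n$ together with the universal property of the quotient, and continuity of the full homotopy follows since $[0,1]$ is locally compact, so that $\CF\times[0,1]$ is the corresponding quotient of $(\coprod_n\CF_n)\times[0,1]$.
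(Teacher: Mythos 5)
Your proof is correct, but it routes around the one step where the paper does something slick. The paper's three moves are: scale every tuple by $2$, translate so $t_1=1$, then shrink each gap exceeding $1$ down to exactly $1$. The whole point of the initial doubling is that it forces $t_{i+1}-t_{i-1}\ge 2$, so any gap $<1$ is automatically flanked by gaps $>1$; once those are capped to exactly $1$, Condition~(3) of $\DF$ holds for free, and no further stage is needed. You never scale, so after your translation and capping stages (your simultaneous cap $g_i\mapsto g_i-u\max(g_i-1,0)$ lands at the same endpoint as the paper's left-to-right tail translations, and is arguably cleaner, since it avoids a concatenation whose length depends on the stratum $\CF_n$), Condition~(3) can genuinely fail --- e.g.\ slacks $(1/2,1/2)$ --- and your third stage does real work that the paper's scaling does. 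That stage checks out: the strict-inequality argument shows $r(s)_i=\max\bigl(0,\,s_i-\max(s_{i-1},s_{i+1})\bigr)$ has no two consecutive positive slacks; since $r(s)_i\le s_i$, the straight-line homotopy only decreases slacks and so never violates $s_i+s_{i+1}\le 1$; and $r$ is the identity on $\DF$, so you get a \emph{strong} deformation retraction, formally a bit more than the paper asserts (the paper only claims homotopy equivalence and dismisses verification with ``it is easy to see''). Your descent-to-the-quotient analysis is also sound and is content the paper omits: it suffices to check the generating doubling moves, where after capping the inserted slack pattern reads $(\dots,0,1,0,\dots)$; one verifies $r$ fixes the inserted $1$ and the flanking $0$'s stay $0$ along the straight line (so the doubling condition persists at all times $u$), the remaining coordinates see identical neighbors in both representatives, and the boundary convention $s_0=s_n=0$ handles doubling of $t_1$ or $t_n$. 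The trade-off is clear: the paper's scaling trick buys brevity by making Condition~(3) automatic, while your version pays for the absence of scaling with an explicit retraction formula, but in return delivers a strong deformation retraction together with a complete well-definedness and continuity verification on the quotient.
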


\begin{proof}
 We can homotope to $\DF$ in three steps. In the first step, we multiply all values by $2$.  Note that this ensures that $t_{i+1}-t_{i-1} \geq 2$ for all $i$, and so if $t_{i+1} - t_i < 1$ then both $t_{i+2} - t_{i+1} > 1$ and $t_{i} - t_{i-1} > 1$.  Next, we linearly translate $(t_1,\dots,t_n)$ until $t_1=1$, satisfying Condition (1) of $\DF$ above. In the third step, proceeding from left to right, any time there is an $i$ with $t_{i+1}-t_i>1$, we linearly translate all the $t_j$ for $j>i$ in sync until $t_{i+1}=t_i+1$.  The resulting tuple satisfies Conditions (1) and (2) above and, together with the note from the first step, also satisfies Condition (3).  It is easy to see that all these moves are indeed homotopy equivalences.
\end{proof}

We conclude that $\CF \simeq \DF \cong \overline{X}$ is a classifying space for $F$, proving Theorem~\ref{thm:CF}.

\begin{remark}\label{rmk:other_models}
 It is worth mentioning here some alternate configuration space models of classifying spaces for $F$ that could be useful. These were communicated to us by Bux, and are due to Belk, though have not appeared in print. We will not be overly detailed here.
 
 \begin{enumerate}
  \item First, we could homotope $\R$ to $(0,1)$ in an appropriate way and obtain a space $\CF'$ described as follows. Points in $\CF'_n$ are non-decreasing $n$-tuples $(t_1,\dots,t_n)$ with $t_i\in (0,1)$ satisfying the requirement that $t_{i+2}-t_i \ge 1/3n$ for all $1\le i \le n-2$. Then $\CF'$ is the union of the $\CF_n'$ modulo $\sim$ in the usual way. The advantage is that we can work with configurations in the bounded interval $(0,1)$ (or $[0,1]$ if we wish); the trade-off is that when we say no three points may be ``close,'' the definition of close depends on the total number of (distinct) points in the configuration.
 
  \item We can also consider configurations not of points, but of subintervals of the line. Said intervals have width say between $1$ and $2$ (inclusive), and a split amounts to replacing an interval of length exactly $2$ by its two halves, which are intervals of length $1$. This viewpoint is more in line with thinking of elements of $F$ not as strand diagrams but as homeomorphisms of the unit interval.
 
  \item Lastly we mention that if we use the unit circle instead of the unit interval, then, as mentioned by Belk in \cite{belk04}, we expect to obtain a classifying space for a cover of Thompson's group $T$. One could also consider configurations on other graphs, and obtain other generalizations of Thompson's group. Such things could also be viewed as generalizing \emph{graph braid groups}, the extra data being the ability of points to split and merge.
 \end{enumerate}
\end{remark}

As a remark, it would be interesting to exhibit an explicit homotopy equivalence from $\CF$ (or from $\overline{X}$) to a classifying space for $F$ with compact $n$-skeleton for each $n$.

We conclude by mentioning that, considering braid groups and $F$ admit classifying spaces given by configurations, one might hope that the braided Thompson's groups introduced by Brin \cite{brin07} and Dehornoy \cite{dehornoy06} also admit classifying spaces given by some sort of configurations. However, this appears to be significantly harder to prove than in the cases of braid groups and $F$.

\bibliographystyle{alpha}
\bibliography{Thompson_classifying_spaces}

\newcommand{\etalchar}[1]{$^{#1}$}
\def\cprime{$'$}
\begin{thebibliography}{BFM{\etalchar{+}}14}

\bibitem[BB05]{birman05}
J.~S. Birman and T.~E. Brendle.
\newblock Braids: a survey.
\newblock In {\em Handbook of knot theory}, pages 19--103. Elsevier B. V.,
  Amsterdam, 2005.

\bibitem[Bel04]{belk04}
J.~Belk.
\newblock {\em Thompson's Group $F$}.
\newblock PhD thesis, Cornell University, 2004.
\newblock arxiv:0708.3609.

\bibitem[BFM{\etalchar{+}}14]{bux14}
K.-U. Bux, M.~Fluch, M.~Marschler, S.~Witzel, and M.~C.~B. Zaremsky.
\newblock The braided {T}hompson's groups are of type $\textrm{F}_\infty$.
\newblock {\em Crelle's Journal}, 2014.
\newblock arXiv:1210.2931. To appear.

\bibitem[BM14]{belk14}
J.~Belk and F.~Matucci.
\newblock Conjugacy and dynamics in {T}hompson's groups.
\newblock {\em Geom. Dedicata}, 169:239--261, 2014.

\bibitem[Bri07]{brin07}
M.~G. Brin.
\newblock The algebra of strand splitting. {I}. {A} braided version of
  {T}hompson's group {$V$}.
\newblock {\em J. Group Theory}, 10(6):757--788, 2007.

\bibitem[Bro92]{brown92}
K.~S. Brown.
\newblock The geometry of finitely presented infinite simple groups.
\newblock In {\em Algorithms and classification in combinatorial group theory
  ({B}erkeley, {CA}, 1989)}, volume~23 of {\em Math. Sci. Res. Inst. Publ.},
  pages 121--136. Springer, New York, 1992.

\bibitem[Deh06]{dehornoy06}
P.~Dehornoy.
\newblock The group of parenthesized braids.
\newblock {\em Adv. Math.}, 205(2):354--409, 2006.

\bibitem[Far03]{farley03}
D.~S. Farley.
\newblock Finiteness and {$\rm CAT(0)$} properties of diagram groups.
\newblock {\em Topology}, 42(5):1065--1082, 2003.

\bibitem[Ste92]{stein92}
M.~Stein.
\newblock Groups of piecewise linear homeomorphisms.
\newblock {\em Trans. Amer. Math. Soc.}, 332(2):477--514, 1992.

\end{thebibliography}

\end{document}